\newtheorem{theorem}{Theorem}
\newtheorem{lemma}{Lemma}
\theoremstyle{remark}
\newtheorem{remark}{Remark}
\newcommand{\C}{\mathbb{C}}
\newcommand{\D}{\Omega}
\newcommand{\ep}{\varepsilon}
\newcommand{\Dc}{\overline{\Omega}}
\newcommand{\dbar}{\overline{\partial}}
\newcommand{\dbars}{\overline{\partial}^{\ast}}
\newcommand{\zb}{\overline{z}}
\newcommand{\wb}{\overline{w}}
\title[Compactness of Hankel and $\dbar$-Neumann operators]{On 
	compactness of Hankel  and the $\dbar$-Neumann operators on 
	Hartogs domains in $\C^2$}
\author{S\"{o}nmez \c{S}ahuto\u{g}lu}
\address[S\"{o}nmez \c{S}ahuto\u{g}lu]{University of Toledo, Department of 
Mathematics \& Statistics, Toledo, OH 43606, USA}
\email{Sonmez.Sahutoglu@utoledo.edu}
\author{Yunus E. Zeytuncu}
\address[Yunus E. Zeytuncu]{University of Michigan--Dearborn, Department of 
Mathematics \& Statistics, Dearborn, MI 48128, USA}
\email{zeytuncu@umich.edu}
\subjclass[2010]{Primary  32W05; Secondary 47B35}
\keywords{Hankel operators, $\overline{\partial}$-Neumann problem, Hartogs domains}
\thanks{The work of the second author was partially supported by a grant from 
the Simons Foundation (\#353525), and also by a University of 
Michigan-Dearborn CASL Faculty Summer Research Grant. 
Both authors would like to thank the American Institute of Mathematics 
for hospitality for hosting a workshop during which this project was started.}
\begin{document}
\begin{abstract}
	We prove that on smooth bounded pseudoconvex Hartogs domains in $\C^2$ 
	compactness of the $\dbar$-Neumann operator is equivalent to compactness 
	of all Hankel operators with symbols smooth on the closure of the domain.  
\end{abstract}
\maketitle

Let $\Omega$ be a bounded pseudoconvex domain in $\mathbb{C}^n$ and 
 $L^2_{(0,q)}(\Omega)$ denote the space of square integrable $(0,q)$ forms 
for $0\leq q\leq n$. The complex Laplacian $\Box=\dbar\dbars+\dbars\dbar$ is 
a densely defined, closed,  self-adjoint linear  operator on $L^2_{(0,q)}(\Omega)$.  
H\"{o}rmander  in \cite{Hormander65} showed that  when $\D$ is bounded and 
pseudoconvex, $\Box$ has a bounded solution operator $N_q$, called the 
$\dbar$-Neumann operator for all $q$. Kohn in \cite{Kohn63} showed that the 
Bergman projection, denoted by $\mathbf{B}$ below,   is connected to the 
$\dbar$-Neumann operator via the following formula 
\[\mathbf{B}=\mathbf{I}-\dbar^*N_1\dbar\]
where $\mathbf{I}$ denotes the identity operator. For more information 
about the $\dbar$-Neumann problem we refer the reader to two books  
\cite{ChenShawBook,StraubeBook}.

Let $A^2(\Omega)$ denote the space of square integrable holomorphic functions 
on $\D$ and $\phi\in L^{\infty}(\Omega)$. The Hankel operator with symbol 
 $\phi, H_{\phi}: A^2(\Omega)\to L^2({\Omega})$  is defined by 
\[H_{\phi}g=[\phi,\mathbf{B}]g=\left(\mathbf{I}-\mathbf{B}\right)\left(\phi g\right).\]
Using Kohn's formula one can immediately see that 
 \[H_{\phi}g=\dbar^*N_1( g\dbar \phi)\]
 for $\phi \in C^1(\Dc)$.
It is clear that $H_{\phi}$ is a bounded operator; however, its compactness depends 
on both the function theoretic properties of the symbol $\phi$ as well as the geometry 
of the boundary of the domain $\Omega$ (see \cite{CuckovicSahutoglu09}). 

The following observation is relevant to our work here. Let $\D$ be a bounded 
pseudoconvex domain in $\C^n$ and $\phi\in C(\overline{\Omega})$. If 
$\dbar$-Neumann operator $N_1$ is compact on $L^2_{(0,1)}(\Omega)$ then the 
Hankel operator $H_{\phi}$ is compact (see \cite[Proposition 4.1]{StraubeBook}). 

We are interested in the converse of this observation.  Namely, 
\begin{quote}
\textit{Assume that $\D$ is a bounded pseudoconvex domain in $\C^n$ and 
	$H_{\phi}$ is compact on $A^2(\Omega)$ for all symbols $\phi\in C(\overline{\Omega})$. 
	Then is the $\dbar$-Neumann operator $N_1$ compact on $L^2_{(0,1)}(\D)$?} 
\end{quote}
This is known as D'Angelo's question and has first appeared in 
\cite[Remark 2]{FuStraube01}.

The answer to D'Angelo's question is still open in general but there are some partial 
results. Fu and Straube in \cite{FuStraube98} showed that the answer is yes if $
\Omega$ is convex. \c{C}elik and the first author \cite[Corollary 1]{CelikSahutoglu2012} 
observed that if $\D$ is not pseudoconvex then the answer to D'Angelo's question may 
be no. Indeed, they  constructed an annulus type domain $\Omega$ where $H_{\phi}$ 
is compact on $A^2(\Omega)$ for all symbols $\phi\in C(\overline{\Omega})$; yet, the 
$\dbar$-Neumann operator $N_1$ is not compact on $L^2_{(0,1)}(\Omega)$. 

\begin{remark}
One can extend the definition of Hankel operators from holomorphic functions to the 
$\dbar$-closed $(0,q)$ forms (denoted by $K^2_{(0,q)}(\Omega)$) and ask the analogous 
problem at the forms level. In this case, an affirmative answer was obtained in 
\cite{CelikSahutoglu14}. Namely, for $1\leq q\leq n-1$ if  $H^q_{\phi}=[\phi, \mathbf{B}_q]$ 
is compact on $K^2_{(0,q)}(\Omega)$ for all symbols $\phi\in C^{\infty}(\overline{\Omega})$ 
then the $\dbar$-Neumann operator $N_{q+1}$ is compact on $L^2_{(0,q)}(\Omega)$.
\end{remark}

In this paper, we provide an affirmative answer to D'Angelo's question on smooth 
bounded pseudoconvex Hartogs domains in $\mathbb{C}^2$. 

\begin{theorem}\label{Thm1}
 Let $\D$ be a smooth bounded pseudoconvex Hartogs domain in $\C^2$. The 
 $\dbar$-Neumann operator $N_1$ is compact  on $L^2_{(0,1)}(\D)$ if and only if 
 $H_{\psi}$ is compact on $A^2(\D)$ for all  $\psi\in C^{\infty}(\Dc)$.
\end{theorem}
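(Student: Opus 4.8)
The forward implication is already in the literature: by \cite[Proposition 4.1]{StraubeBook}, compactness of $N_1$ on $L^2_{(0,1)}(\D)$ forces every Hankel operator with continuous symbol to be compact. So the real content of Theorem~\ref{Thm1} is the converse, and I would argue it in contrapositive form: \emph{assuming $N_1$ is not compact, produce a single symbol $\psi\in C^\infty(\Dc)$ for which $H_\psi$ fails to be compact on $A^2(\D)$.}

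The plan is to exploit the $S^1$-symmetry of $\D$. Writing the Hartogs variable as $z_2$ and decomposing into the isotypic components $e^{ik\arg z_2}$, both $L^2(\D)$ and $L^2_{(0,1)}(\D)$ split accordingly, and since $\D$ is smooth and pseudoconvex this reduces the $\dbar$-complex on $\D$ to a countable family of weighted $\dbar$-problems on the planar base $D=\{z_1:(z_1,0)\in\D\}$ (equivalently, to a family of Schr\"odinger operators on $D$, in the spirit of \cite{FuStraube01} and of the Christ--Fu analysis). Each fiber operator is compact --- essentially because the base is one-dimensional and the weight, which degenerates toward $\partial D$, only improves the solution estimates --- so one obtains: $N_1$ is compact on $\D$ \emph{if and only if} the operator norms of these fiber operators tend to $0$ as the mode $k\to\infty$. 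The same Fourier decomposition applies to $H_\psi$ for any $\psi$ that is a finite sum of terms $z_2^{\,l}\chi(z_1)$ and $\bar z_2^{\,l}\chi(z_1)$ with $\chi\in C^\infty(\overline D)$: such an $H_\psi$ splits into Hankel operators on the weighted Bergman spaces $A^2_{2(k+1)\varphi}(D)$ of the base, and is compact precisely when each summand is compact (again automatic) and the summand norms decay in $k$. Note that for the contrapositive we are free to choose $\psi$ of exactly this shape, so this restricted class of symbols is all we need.

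With both decompositions in hand, the theorem becomes the fiberwise statement that non-decay of the $\dbar$-Neumann fiber norms is detected by the Hankel fiber norms of some admissible symbol. Here I would run, uniformly in the weight, the one-variable mechanism underlying the convex case of D'Angelo's question \cite{FuStraube98}: on a planar base the obstruction to compactness of the weighted canonical solution operator is concentrated over the part of $\partial D$ lying under the weakly pseudoconvex boundary of $\D$ (the set where $\varphi$ is harmonic), and from a sequence realizing the non-compactness one can manufacture holomorphic functions $g_n$ together with a cut-off $\chi\in C^\infty(\overline D)$ so that $g_n\,\dbar(\bar z_1\chi)$ --- or $g_n\,\dbar(\bar z_2\,\chi)$ in the transverse direction --- approximates that sequence closely enough to force $\|H^{(k)}_{\psi}\|\not\to 0$ for $\psi=\bar z_1\chi$ or $\psi=\bar z_2\,\chi$, hence $H_\psi$ non-compact.

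The step I expect to be the main obstacle is precisely this last transfer. Compactness of $N_1$ is governed by arbitrary test $(0,1)$-forms in $\mathrm{Dom}(\dbar)\cap\mathrm{Dom}(\dbars)$, while a Hankel operator only ``sees'' forms of the rigid type $g\,\dbar\psi$ with $g$ holomorphic and $\psi$ smooth up to the boundary; arranging that a sequence witnessing non-compactness of $N_1$ can be replaced, without destroying its effect, by one of Hankel type is what genuinely uses the Hartogs symmetry and the explicit weighted-Bergman description of the fibers. A secondary, more technical point is treating Hartogs domains that are not complete (points with $z_2=0$ absent) and verifying that smoothness of $b\D$ supplies enough regularity of the defining weight $\varphi$ for the fiber analysis; I expect these to be routine once the symmetry reduction is set up.
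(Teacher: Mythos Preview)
Your $S^1$-decomposition into weighted base problems is the right structural move, and the paper uses the same reduction, feeding into the Christ--Fu eigenvalue machinery. The divergence is exactly at the step you yourself flag as the main obstacle, and I do not think your version of it can be completed as written.

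The difficulty with the contrapositive is that you must exhibit a \emph{single} symbol $\psi$. The paper argues directly instead, and uses the full hypothesis --- compactness of $H_\psi$ for \emph{every} smooth $\psi$ --- in an essential way. Near a boundary point where the graph is $|w|=e^{-\varphi(z)}$, for each $\beta\in C^\infty_0$ on the base one picks $\psi$ with $\psi_{\bar z}=\beta$, applies a compactness estimate
\[
\|H_\psi h\|^2\le \varepsilon\|h\,\dbar\psi\|\,\|h\|+C_\varepsilon\|h\,\dbar\psi\|_{-1}\|h\|
\]
to $h=w^{\pm n}$, and kills the Sobolev~$-1$ term for large $n$ via a separate lemma giving $\|g(z)w^n\|_{-1}\lesssim n^{-1}\|g(z)w^n\|$ on Hartogs domains. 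Because $\beta$ is arbitrary, a duality argument then yields $\int|u|^2e^{\pm 2n\varphi}\le o(1)\int|u_z|^2e^{\pm 2n\varphi}$ for \emph{all} test functions $u$, i.e.\ $\lambda^m_{n\varphi}\to\infty$, and Christ--Fu then gives Property~$(P)$, hence compactness of $N_1$. The quantifier over $\beta$ is what makes the duality go; a fixed $\psi=\bar z\chi$ tests only the single datum $\beta=\partial_{\bar z}(\bar z\chi)$, from which one cannot recover the full eigenvalue statement.

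Phrased in your contrapositive: if $\lambda^m_{n\varphi}\not\to\infty$, one obtains near-ground-states $u_n$ along a subsequence, but nothing prevents them from concentrating at different base points as $n$ varies, so no fixed cut-off $\chi$ need see them all. Your ``manufacturing'' step presupposes precisely the localization that is missing, and the analogy with the convex case is misleading --- Fu and Straube pass through the existence of a boundary disc, a geometric fact with no counterpart here. The paper's remaining ingredients (localizing Hankel compactness to a strictly pseudoconvex subdomain, and treating the two signs of $\rho_{|w|}$ separately to cover annular fibers) go through roughly as you anticipate, but the Sobolev~$-1$ decay lemma and the use of the full family of symbols are the ideas your outline is missing.
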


As mentioned above, compactness of $N_1$ implies that $H_{\psi}$ is compact on any 
bounded pseudoconvex domain  (see \cite[Proposition 4]{FuStraube01} or  
\cite[Proposition 4.1]{StraubeBook}). The key ingredient of our proof of the converse 
is the characterization of the compactness of $N_1$ in terms of ground state energies 
of certain Schr\"odinger operators as previously explored in \cite{FuStraube02,FuChrist05}. 

We will need few lemmas before we prove Theorem \ref{Thm1}. 

\begin{lemma}\label{LemSobolev1}
Let $A(a,b)=\{z\in \C:a<|z|<b\}$ for $0<a<b<\infty$ and $d_{ab}(w)$ be the distance from 
$w$ to the boundary of $A(a,b)$. Then  there exists $C>0$ such that 
\[\int_{A(a,b)}(d_{ab}(w))^2|w|^{2n}dV(w)\leq \frac{C}{n^2} \int_{A(a,b)}|w|^{2n}dV(w)\]
for nonzero integer $n$.
\end{lemma}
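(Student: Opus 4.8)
The plan is to reduce the estimate to a one-dimensional computation in polar coordinates and then exploit the fact that the weight $|w|^{2n}$ concentrates, for large $|n|$, near one of the two boundary circles $|w|=a$ or $|w|=b$, where the distance function $d_{ab}$ vanishes linearly. Writing $w=re^{i\theta}$, both integrals become multiples of $2\pi$ times one-dimensional integrals over $r\in(a,b)$; the angular integration contributes only a harmless constant factor and can be dropped. Near each boundary circle we have $d_{ab}(w)\le \min\{r-a,\,b-r\}$, and in fact $d_{ab}(w)$ is comparable to this quantity throughout the annulus, so it suffices to bound $\int_a^b \min\{r-a,b-r\}^2 r^{2n+1}\,dr$ by $C n^{-2}\int_a^b r^{2n+1}\,dr$.

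The key step is the following elementary one-variable fact: for $m=2n+1$ with $|n|\ge 1$, and for a function like $g(r)=\min\{r-a,b-r\}^2$ that vanishes to second order at the relevant endpoint, one has $\int_a^b g(r)\,r^m\,dr \le C m^{-2}\int_a^b r^m\,dr$. I would prove this by splitting the interval at the midpoint $(a+b)/2$. On the half where the weight $r^m$ is largest — say $r\in[(a+b)/2,\,b]$ when $n>0$ — we use $g(r)\le (b-r)^2$ and integrate by parts twice (or simply change variables $s=b-r$ and compare $\int_0^{(b-a)/2} s^2 (b-s)^m\,ds$ with $\int_0^{(b-a)/2}(b-s)^m\,ds$); since $(b-s)^m$ is monotone, the ratio is controlled by the "effective length scale" $1/m$ of the exponential-type weight $e^{m\log(b-s)}$, giving the factor $m^{-2}$. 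On the complementary half the weight $r^m$ is smaller by a fixed exponential factor $((a+b)/(2b))^{m}$ (for $n>0$) which beats any polynomial, so that piece is negligible compared to $n^{-2}\int_a^b r^m\,dr$. The case $n<0$ is symmetric, with the roles of $a$ and $b$ interchanged, using that $r^{2n+1}$ is then largest near $r=a$.

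The main obstacle is making the comparison on the "good" half uniform in both $n$ and the fixed radii $a,b$ — i.e., producing a single constant $C$ depending only on $a$ and $b$ but not on $n$. This is where one must be slightly careful: the naive integration by parts produces boundary terms at the midpoint, and one has to check these are dominated by $n^{-2}$ times the main term rather than merely by the main term. I would handle this by estimating $\int_0^{L} s^2 (b-s)^m\,ds$ from above via $\int_0^{\infty} s^2 e^{-ms/b}\,ds \cdot (\text{const})= C b^3 m^{-3}$ type bounds after the substitution $(b-s)^m \le b^m e^{-ms/b}$, and estimating $\int_a^b r^m\,dr$ from below by integrating over a fixed subinterval adjacent to $b$ (for $n>0$), which gives a lower bound of order $b^m/m$; the ratio is then $O(m^{-2})$ as required. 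Assembling the two halves and reinstating the angular factor $2\pi$ completes the proof.
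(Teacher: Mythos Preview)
Your argument is correct and complete in outline; the exponential comparison $(b-s)^m\le b^m e^{-ms/b}$ together with the lower bound $\int_a^b r^m\,dr\ge c\,b^{m+1}/m$ does give the $O(m^{-2})$ ratio on the good half, and the bad half is indeed exponentially negligible. The symmetric treatment for $n<0$ works the same way once you note that $(a+s)^m\le a^m e^{-|m|s/(2a)}$ for $0\le s\le a$ when $m<0$.

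The paper takes a different, more computational route. Rather than estimating, it evaluates both integrals in closed form: writing $c=(a+b)/2$ and expanding $(r-a)^2$ and $(b-r)^2$, one gets
\[
\int_{A(a,b)}(d_{ab}(w))^2|w|^{2n}\,dV
=\frac{\pi(b^{2n+4}-a^{2n+4})}{(n+1)(n+2)(2n+3)}-\frac{\pi c^{2n+2}(b^2-a^2)}{(n+1)(2n+3)},
\]
and then simply checks that $n^2$ times the ratio of this to $\int_{A(a,b)}|w|^{2n}\,dV=\frac{\pi}{n+1}(b^{2n+2}-a^{2n+2})$ tends to $b^2/2$ as $n\to\pm\infty$. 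Your estimation approach is more conceptual and would generalize to weights that are not exact powers (anything concentrating like $e^{m\phi(r)}$ with $\phi$ maximized at an endpoint), whereas the paper's direct computation is shorter here precisely because the integrands are polynomials and yields the sharp limiting constant.
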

\begin{proof}
We will use the fact that $d_{ab}(w)=\min\{b-|w|,|w|-a\}$ with polar coordinates to 
compute the first integral. One can compute that 
\[\int_{A(a,b)}|w|^{2n}dV(w)= \frac{\pi}{n+1}(b^{2n+2}-a^{2n+2})\]
for $n\neq -1$.  Let $c=\frac{a+b}{2}$. Then 
\begin{align*}
\int_{A(a,b)}(d_{ab}(w))^2|w|^{2n}dV(w)
=& \int_{A(a,c)}(|w|-a)^2|w|^{2n}dV(w)\\
&+\int_{A(c,b)}(b-|w|)^2|w|^{2n}dV(w)\\
 =&2\pi\int_a^c(a^2\rho^{2n+1}-2a\rho^{2n+2}+\rho^{2n+3})d\rho\\
 &+2\pi\int_c^b(b^2\rho^{2n+1}-2b\rho^{2n+2}+\rho^{2n+3})d\rho\\
 =&2\pi(b^{2n+4}-a^{2n+4})\left(\frac{1}{2n+2}-\frac{2}{2n+3}+\frac{1}{2n+4}\right)\\
 &+2\pi(a^2-b^2)\frac{c^{2n+2}}{2n+2}+4\pi(b-a)\frac{c^{2n+3}}{2n+3}\\
 =&\frac{\pi(b^{2n+4}-a^{2n+4})}{(n+1)(n+2)(2n+3)}-\frac{\pi c^{2n+2}(b^2-a^2)}{(n+1)(2n+3)}.
\end{align*}
In the last equality we used  the fact that $c=\frac{a+b}{2}$. Then one can show that 
\[\lim_{n\to\pm \infty}\frac{n^2\int_{A(a,b)}(d_{ab}(w))^2|w|^{2n}dV(w)}{\int_{A(a,b)}|w|^{2n}dV(w)}
=\frac{b^2}{2}.\]
Therefore, there exists $C>0$ such that 
\[\int_{A(a,b)}(d_{ab}(w))^2|w|^{2n}dV(w)\leq \frac{C}{n^2} \int_{A(a,b)}|w|^{2n}dV(w)\]
 for nonzero integer $n$.
\end{proof}

We note that throughout the paper $\|.\|_{-1}$ denotes the Sobolev $-1$ norm. 

\begin{lemma}\label{LemSobolev2}
Let  $\D=\{(z,w)\in \C^2: z\in D \text{ and } \phi_1(z)<|w|<\phi_2(z)\}$ be a 
bounded Hartogs domain.  Then there exists $C>0$ such that 
 \[\|g(z)w^n\|_{-1}\leq \frac{C}{n}\|g(z)w^n\|\]
for any  $g\in L^2(D)$ and nonzero integer $n$, as long as the right hand side is finite.
\end{lemma}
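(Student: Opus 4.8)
The plan is to reduce the Sobolev $-1$ estimate for $g(z)w^n$ on the Hartogs domain $\D$ to the one-variable computation already carried out in Lemma \ref{LemSobolev1}. The Sobolev $-1$ norm is dual to the Sobolev $1$ norm: for a fixed $f\in L^2(\D)$ we have
\[
\|f\|_{-1}=\sup\left\{\left|\int_{\D} f\,\overline{h}\,dV\right| : h\in W^1_0(\D),\ \|h\|_1\leq 1\right\},
\]
or, equivalently, $\|f\|_{-1}=\|u\|_1$ where $u$ solves a suitable elliptic problem $(I-\Delta)u=f$ with Dirichlet data; either way it suffices to produce, for $f=g(z)w^n$, a function $u$ with $\|u\|_1\lesssim \tfrac{1}{|n|}\|f\|$ and $\langle f,h\rangle=\langle u,h\rangle_1$ for all test $h$, or more directly to bound the pairing $\left|\int_\D g(z)w^n\overline{h}\,dV\right|$ by $\tfrac{C}{|n|}\|g(z)w^n\|\,\|h\|_1$.

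First I would exploit the rotational symmetry in the $w$ variable: expanding a test function $h(z,w)$ in its Fourier series $h(z,w)=\sum_k h_k(z,|w|)(w/|w|)^k$ (or working with $h$ supported in the fibre over $z$), only the $k=n$ mode pairs nontrivially against $g(z)w^n$, so the problem becomes a fibrewise one-dimensional estimate. On each fibre $\{z\}\times A(\phi_1(z),\phi_2(z))$, I want a Poincaré-type inequality: because the fibre is an annulus, a function vanishing on its boundary can be estimated in terms of its gradient, and — crucially — the weight $|w|^{2n}$ together with the factor $d_{ab}(w)^2$ is precisely what Lemma \ref{LemSobolev1} controls, gaining a factor $1/n^2$. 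The idea is that to build the "anti-derivative" $u$ one integrates $w^n$ radially, which produces a factor like $w^{n+1}/(n+1)$, i.e. a gain of $1/|n|$, and the boundary-distance weight appears naturally when one applies the fundamental theorem of calculus from the boundary of the annulus inward.

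Concretely, the key step is: define $u(z,w)$ by integrating in the radial direction so that $u$ vanishes on $\partial\D$ and $u$ is, roughly, $g(z)w^n$ times something comparable to $d_{ab}(w)^2$ up to the $1/n$ scaling; then estimate $\|u\|^2$ and $\|\nabla u\|^2$. The $L^2$ norm of $u$ is handled directly by Lemma \ref{LemSobolev1} applied on each fibre (with $a=\phi_1(z)$, $b=\phi_2(z)$), integrated in $z$ over $D$; the gradient in the $w$ direction of $u$ essentially returns $g(z)w^n$ (times a bounded factor), giving $\|\partial_w u\|\lesssim \|g(z)w^n\|$, and since we have an extra $1/|n|$ in $u$ itself we can absorb a factor; the gradient in the $z$ direction brings down $g'(z)$ and derivatives of $\phi_1,\phi_2$, which is where one must be careful. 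Finally, $\|f\|_{-1}\leq \sup_h |\langle f,h\rangle|/\|h\|_1$ and $\langle f,h\rangle=\langle f,h-\text{(its }k=n\text{ mode)}\rangle + \langle f, h_n\rangle$, with the first term zero and the second bounded by $\|u\|_1\|h\|_1$ via the defining relation, yields $\|g(z)w^n\|_{-1}\leq \tfrac{C}{|n|}\|g(z)w^n\|$.

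**Main obstacle.** The delicate point is the dependence on the $z$-variable: the annular fibres have radii $\phi_1(z),\phi_2(z)$ varying with $z$, so differentiating $u$ in $z$ hits the $z$-dependent limits of integration and the functions $\phi_1,\phi_2$, producing boundary terms and extra factors of $n$ (from differentiating $|w|^n$-type expressions or from $\phi_j(z)^n$ appearing after evaluating radial integrals). Controlling these so that they do not destroy the $1/n$ gain — using boundedness of $\D$ (hence $\phi_1,\phi_2$ and their derivatives bounded, $\phi_1$ bounded away from $0$ in the interesting region, or more precisely handling the part of $D$ where $\phi_1$ may touch $0$ separately) — is where the real work lies; the constant $C$ in Lemma \ref{LemSobolev1} being uniform in $a,b$ over compact ranges is what makes the fibrewise integration legitimate.
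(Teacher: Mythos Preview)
Your plan takes a detour that the paper avoids entirely, and the ``main obstacle'' you flag is in fact fatal for your approach as written. You propose to build a potential $u\in W^1_0(\D)$ with $\langle f,h\rangle=\langle u,h\rangle_1$ and then estimate $\|u\|_1$, which forces you to control $\partial_z u$. But $g$ is only assumed to be in $L^2(D)$; there is no $g'(z)$ to speak of, and no regularity of $\phi_1,\phi_2$ is assumed either, so the $z$-derivative terms you worry about are not merely hard to bound --- they need not exist. Any argument that differentiates in $z$ is using hypotheses the lemma does not provide.

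The paper's proof bypasses all of this with a single inequality: from the Hardy-type estimate $\|\phi/d_\D\|\leq C_1\|\phi\|_1$ for $\phi\in W^1_0(\D)$ one gets immediately, by duality,
\[
\|f\|_{-1}\;\leq\; C_1\,\|d_\D\, f\|.
\]
For $f(z,w)=g(z)w^n$ one then observes $d_\D(z,w)\leq d_z(w):=\mathrm{dist}\bigl(w,\partial A(\phi_1(z),\phi_2(z))\bigr)$, so
\[
\|d_\D f\|^2\;\leq\;\int_D |g(z)|^2\int_{A(\phi_1(z),\phi_2(z))} d_z(w)^2|w|^{2n}\,dV(w)\,dV(z),
\]
and Lemma~\ref{LemSobolev1} applied fibrewise gives the factor $C/n^2$. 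No $z$-derivatives, no Fourier decomposition, no construction of $u$.

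If you want to salvage your integration-by-parts idea, note that you do not need $\|u\|_1$ at all: writing $w^n=r^ne^{in\theta}$ and integrating once by parts in $r$ against a test function $h\in C^\infty_c(\D)$ (which vanishes at $r=\phi_1(z)$ and $r=\phi_2(z)$) already yields
\[
\bigl|\langle g(z)w^n,h\rangle\bigr|\;\lesssim\;\frac{1}{|n|}\,\|g(z)w^n\|\,\|r\,\partial_r h\|\;\lesssim\;\frac{1}{|n|}\,\|g(z)w^n\|\,\|h\|_1,
\]
using only that $r$ is bounded on $\D$. This gives the lemma without ever touching the $z$-variable, and shows that your ``main obstacle'' was an artefact of asking for more than the duality requires.
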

 \begin{proof}
We will denote the distance from $(z,w)$ to the boundary of $\D$ by $d_{\D}(z,w)$. 
 	 We note that $W^{-1}(\D)$ is the dual of $W^1_0(\D)$, the closure of $C^{\infty}_0(\D)$ 
 	 in $W^1(\D)$. Furthermore, 
 	 \[\|f\|_{-1}=\sup\{|\langle f,\phi\rangle|:\phi\in C^{\infty}_0(\D), \|\phi\|_1\leq 1\}\]
 	 for $f\in W^{-1}(\D)$. Then there exists $C_1>0$ such that 
 	 \begin{align*}
 	 \|f\|_{-1}\leq \|d_{\D}f\|\sup\{\|\phi/d_{\D}\|:\phi\in C^{\infty}_0(\D), \|\phi\|_1\leq 1\}
 	 \leq C_1\|d_{\D}f\|. 
 	 \end{align*}
In the second inequality above we used the fact that 
(see  \cite[Proof of Theorem C.3]{ChenShawBook})
there exists $C_1>0$ such that 	 
$\|\phi/d_{\D}\|\leq C_1\|\phi\|_1$  for all $\phi \in W^1_0(\D)$.
 	 
Let $d_z(w)$ denote the distance from $w$  to the 
boundary of $A(\phi_1(z),\phi_2(z))$. Then 
there exists $C_1>0$ such that  
\begin{align*}
 \|g(z)w^n\|_{-1}^2\leq&C_1\int_{\D}(d_{\D}(z,w))^2|g(z)|^2|w|^{2n}dV(z,w)\\
 \leq &C_1\int_D|g(z)|^2\int_{\phi_1(z)<|w|<\phi_2(z)}(d_z(w))^2|w|^{2n}dV(w).
 \end{align*}
Lemma \ref{LemSobolev1} and the assumption that $\D$ is bounded    
imply that there exists $C_2>0$ such that 
 \[\int_{\phi_1(z)<|w|<\phi_2(z)}(d_z(w))^2|w|^{2n}dV(w)\leq 
\frac{C_2}{n^2}\int_{\phi_1(z)<|w|<\phi_2(z)}|w|^{2n}dV(w).\]
Then 
 \begin{align*}
 \int_D|g(z)|^2\int_{\phi_1(z)<|w|<\phi_2(z)}(d_z(w))^2|w|^{2n}dV(w)
 \leq &\frac{C_2}{n^2}\int_D|g(z)|^2\int_{\phi_1(z)<|w|<\phi_2(z)}|w|^{2n}dV(w)\\
 =&\frac{C_2}{n^2} \|g(z)w^n\|^2.
\end{align*}
Therefore, for $C=\sqrt{C_1C_2}$ we have  
$\|g(z)w^n\|_{-1}\leq \frac{C}{n}\|g(z)w^n\|$ for nonzero integer $n$.
\end{proof}

\begin{lemma}\label{LemCompEstimate}
	Let $\D$ be a bounded pseudoconvex domain in $\C^n$ and $\psi\in C^1(\Dc)$. 
Then $H_{\psi}$ is compact if and only if  for any $\ep>0$ there exists 
	$C_{\ep}>0$ such that 
	\begin{align}\label{EqnCompEst}
		\|H_{\psi}h\|^2\leq \ep\|h\dbar 
		\psi\|\|h\|+C_{\ep}\|h\dbar\psi\|_{-1}\|h\| 
	\end{align}
	for  $h\in A^2(\D)$.
\end{lemma}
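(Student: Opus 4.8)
The plan is to exploit the formula $H_\psi h = \dbars N_1(h\,\dbar\psi)$, together with the standard characterization of compactness of an operator $T$ via a "compactness estimate": $T$ is compact if and only if for every $\ep>0$ there is $C_\ep>0$ with $\|Th\|\le \ep\|h\| + C_\ep\|h\|_{-1}$, valid because the inclusion $L^2(\D)\hookrightarrow W^{-1}(\D)$ is compact on a bounded domain. So the first step is to reduce the statement to producing, and exploiting, such an estimate for $H_\psi$ acting on $A^2(\D)$, but with the $W^{-1}$ norm measured on the data $h\,\dbar\psi$ rather than on $h$ itself.

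For the "if" direction, I would argue as follows. Suppose the estimate \eqref{EqnCompEst} holds. Let $\{h_j\}$ be a bounded sequence in $A^2(\D)$; after passing to a subsequence we may assume $h_j\rightharpoonup 0$ weakly in $A^2(\D)$ (after subtracting the limit, which lies in $A^2$ by closedness). Then $h_j\to 0$ in $L^2_{\mathrm{loc}}$, and since $\dbar\psi$ is bounded with compact support issues handled by boundedness of $\D$, one gets $h_j\,\dbar\psi\to 0$ in $W^{-1}(\D)$; indeed $\|h_j\,\dbar\psi\|_{-1}\le \|\dbar\psi\|_\infty \|h_j\|_{-1}\to 0$ by the compactness of $L^2\hookrightarrow W^{-1}$. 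Plugging into \eqref{EqnCompEst} with the boundedness of $\{h_j\}$ and of $\{h_j\,\dbar\psi\}$ in $L^2$, we get $\limsup_j\|H_\psi h_j\|^2\le \ep\cdot M$ for a fixed constant $M$ and arbitrary $\ep>0$; hence $H_\psi h_j\to 0$ in $L^2$, which is precisely compactness of $H_\psi$.

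For the "only if" direction, assume $H_\psi$ is compact. Then for any $\ep>0$ the operator $H_\psi$ satisfies a compactness estimate; the technical point is to convert the generic estimate (in terms of $\|h\|_{-1}$) into the sharper one \eqref{EqnCompEst} with $\|h\,\dbar\psi\|$ and $\|h\,\dbar\psi\|_{-1}$ on the right. The natural route is to use the identity $H_\psi h=\dbars N_1(h\,\dbar\psi)$ and the basic a priori inequality $\|\dbars N_1 u\|^2 \le C\,\|u\|\,\|N_1 u\|_? $ — more precisely, one interpolates: $\|H_\psi h\|^2=\langle \dbars N_1(h\dbar\psi),\dbars N_1(h\dbar\psi)\rangle = \langle h\dbar\psi, \dbar\dbars N_1(h\dbar\psi)\rangle\le \|h\dbar\psi\|\,\|\dbar\dbars N_1(h\dbar\psi)\|$ — and then bounds the last factor using that $H_\psi$, hence $\dbar\dbars N_1$ composed with multiplication by $\dbar\psi$ restricted to $A^2$, is a compact operator from $L^2$ to $L^2$, so it satisfies a compactness estimate allowing one to trade an $\ep$-small term against a $C_\ep\|h\dbar\psi\|_{-1}$ term. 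I expect the main obstacle to be precisely this interpolation bookkeeping: pushing the generic $\|h\|_{-1}$-type compactness estimate through Kohn's formula to land on the right side of \eqref{EqnCompEst}, keeping track that the relevant "small" and "dual-norm" quantities are expressed in terms of $h\,\dbar\psi$, and ensuring the constants depend only on $\psi$ and $\D$ and not on $h$. Once that is in place, both directions close routinely.
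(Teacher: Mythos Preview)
Your ``if'' direction is essentially the paper's argument and is fine (one minor point: the inequality $\|h_j\dbar\psi\|_{-1}\le\|\dbar\psi\|_\infty\|h_j\|_{-1}$ needs $\dbar\psi$ to be a multiplier on $W^1_0$, which requires more than $\psi\in C^1$; it is cleaner to argue, as the paper does, that $h_j\dbar\psi\rightharpoonup 0$ weakly in $L^2$ and then invoke compactness of $L^2\hookrightarrow W^{-1}$).

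The ``only if'' direction, however, has a real gap. Your key identity is miscomputed: moving $\dbars$ across in $\langle \dbars N_1 u,\dbars N_1 u\rangle$ gives $\langle N_1 u,\dbar\dbars N_1 u\rangle$, not $\langle u,\dbar\dbars N_1 u\rangle$. More importantly, since $u=h\,\dbar\psi$ is $\dbar$-closed for $h\in A^2(\D)$, one has $\dbar\dbars N_1 u=u$ exactly; thus the operator you hoped would be compact, $h\mapsto \dbar\dbars N_1(h\,\dbar\psi)$, is simply multiplication by $\dbar\psi$ on $A^2(\D)$ and is not compact. So the interpolation you sketch cannot produce the $\ep$/$C_\ep$ splitting. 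A second issue is that your route never naturally generates the factor $\|h\|$ that appears on the right of \eqref{EqnCompEst}.

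The paper avoids both problems by writing $\|H_\psi h\|^2=\langle H_\psi^\ast H_\psi h,h\rangle\le\|H_\psi^\ast H_\psi h\|\,\|h\|$, which immediately supplies the $\|h\|$ factor, and then applies a compactness estimate to the compact operator $H_\psi^\ast$: for each $\ep$ there is a compact $K_\ep$ with $\|H_\psi^\ast H_\psi h\|\le \tfrac{\ep}{2\|\dbars N_1\|}\|H_\psi h\|+\|K_\ep H_\psi h\|$. Using $H_\psi h=\dbars N_1(h\,\dbar\psi)$ the first term becomes $\tfrac{\ep}{2}\|h\,\dbar\psi\|$, and since $K_\ep\dbars N_1$ is compact one applies the standard $L^2$/$W^{-1}$ compactness estimate to it with input $h\,\dbar\psi$, yielding $\tfrac{\ep}{2}\|h\,\dbar\psi\|+C_\ep\|h\,\dbar\psi\|_{-1}$. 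This is the bookkeeping that makes the argument close; your proposed interpolation does not.
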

\begin{proof}
	First assume that $H_{\psi}$ is compact. Then  
	\[\| H_{\psi}h\|^2= \langle H^*_{\psi}H_{\psi} h,h\rangle \leq \|H^*_{\psi}H_{\psi} h\| \|h\|\]
	for  $h\in A^2(\D)$. Compactness of $H_{\psi}$ implies that $H^*_{\psi}$ is compact. 
		Now we apply the compactness estimate in \cite[Proposition V.2.3]{D'AngeloBook}
		to  $H^*_{\psi}$. For $\ep>0$ there exists a compact operator $K_{\ep}$ 
		such that 
		\begin{align*}
			\|H^*_{\psi}H_{\psi} h\|  \leq & \frac{\ep}{2\|\dbar^*N\|} 
			\|H_{\psi} h\| + \|K_{\ep}H_{\psi} h\| \\
			\leq & \frac{\ep}{2} \| h\dbar \psi \|+ \|K_{\ep}H_{\psi} h\|.
		\end{align*}
		In the second inequality we used the fact that 
		$H_{\psi}h=\dbar^*N(h\dbar \psi)$. Since $\D$ is bounded pseudoconvex 
		$\dbar^*N$ is bounded and hence $K_{\ep}\dbar^*N$ is compact. 
		Now we use the fact that $H_{\psi}h=\dbar^*N(h\dbar \psi)$ and  
		 \cite[Lemma 4.3]{StraubeBook} for the compact operator 
		$K_{\ep}\dbar^*N$ to conclude that there exists 
		$C_{\ep}>0$ such that   
	\[\|K_{\ep}H_{\psi} h\|\leq \frac{\ep}{2}\|h\dbar \psi\| 
		+C_{\ep}\|h\dbar\psi\|_{-1}.\]
		Therefore, for $\ep>0$ there exists $C_{\ep}>0$ such that 
		\[\| H_{\psi}h\|^2\leq \ep\|h\dbar \psi\|\|h\| +C_{\ep}\|h\dbar\psi\|_{-1}\|h\|\]
			for  $h\in A^2(\D)$.
	
	To prove the converse assume \eqref{EqnCompEst} and choose $\{h_j\}$ a sequence 
	in $A^2(\D)$ such that  $\{h_j\}$ converges to zero weakly. Then the sequence 
	$\{h_j\}$ is bounded and $\|h_j\dbar\psi\|_{-1}$ converges to 0 (as the imbbeding 
	from $L^2$ into Sobolev $-1$ is compact). The inequality  	\eqref{EqnCompEst} 
	implies that there exists $C>0$ such that for every $\ep>0$ there exists 
	$J$ such that $\|H_{\psi}h_j\|^2 \leq C\ep$ for $ j\geq J$. 
	That is, $\{H_{\psi}h_j\}$ converges to 0. That is, $H_{\psi}$ is compact. 	
	\end{proof}

The following lemma is contained in \cite[Remark 1]{Sahutoglu12}. The 
superscripts on the Hankel operators are used to emphasize the domains. 
\begin{lemma}[\cite{Sahutoglu12}]\label{LemLocalization}
	Let $\D_1$ be a bounded pseudoconvex domain in $\C^n$ and  $\D_2$ be a  
	bounded strongly pseudoconvex domain in $\C^n$ with $C^2$-smooth boundary. 
	Assume that $U=\D_1\cap \D_2$ is connected, $\phi \in C^1(\Dc_1)$, and 
	$H^{\D_1}_{\phi}$  is compact  on $A^{2}(\D_1).$ Then $H^{U}_{\phi}$ is 
	compact on $A^{2}(U).$
\end{lemma}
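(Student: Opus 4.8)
The plan is to prove Lemma \ref{LemLocalization} by a standard localization argument for Hankel operators. The idea is to relate the Hankel operator on $U=\D_1\cap\D_2$ to the Hankel operator on $\D_1$ via a cutoff function supported away from the ``new'' part of the boundary $\partial\D_2\cap\D_1$, and then to handle the cutoff region using the strong pseudoconvexity of $\D_2$. First I would choose a function $\chi\in C^\infty_0(\D_2)$ with $\chi\equiv 1$ on a neighborhood of $\overline{\D_1}\setminus\D_2$ intersected with $\overline U$; more precisely, $\chi$ should equal $1$ near the part of $\partial U$ coming from $\partial\D_1$ and be supported in $\D_2$. Then for $h\in A^2(U)$ I split $\|H^U_\phi h\|$ using the product rule: $H^U_\phi h = (\mathbf I-\mathbf B_U)(\phi h)$, and I estimate the $L^2$ distance from $\phi h$ to $A^2(U)$ in two pieces, one where $\chi$ is supported (interacting with $\partial\D_1$-type boundary, controlled by $H^{\D_1}_\phi$) and one where $1-\chi$ is supported (a neighborhood of the strongly pseudoconvex boundary piece $\partial\D_2$).

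The key steps, in order: (1) Observe that $\|H^U_\phi h\|\le \operatorname{dist}_{L^2(U)}(\phi h, A^2(U))$, and that it suffices to produce, for each $\ep>0$, a function $f_\ep\in A^2(U)$ and a constant $C_\ep$ with $\|\phi h - f_\ep\|_{L^2(U)}^2\le \ep\|h\|^2 + C_\ep\|h\|_{-1}^2$ (or an analogous compactness estimate), since such an estimate is equivalent to compactness of $H^U_\phi$ by the argument in the proof of Lemma \ref{LemCompEstimate}. (2) Using $\chi$, write $\phi h = \chi\phi h + (1-\chi)\phi h$. For the first term, since $\chi h$ can be regarded (after extension by zero) as an element related to $L^2(\D_1)$ and $\chi$ is supported in $\D_2$, I would express $\chi(H^{\D_1}_\phi h')$ in terms of $H^U_\phi$ applied to the restriction, using that the Bergman projections of $\D_1$ and $U$ differ by a smoothing operator on the overlap away from $\partial\D_2$ — this is where compactness of $H^{\D_1}_\phi$ on $A^2(\D_1)$ feeds in, transferring a compactness estimate for $H^{\D_1}_\phi$ to the $\chi$-supported piece. (3) For the $(1-\chi)\phi h$ term, which is supported near the strongly pseudoconvex boundary piece $\partial\D_2\cap\D_1$, I would invoke the fact that on a strongly pseudoconvex domain the $\dbar$-Neumann operator is compact (indeed subelliptic), so $H^{\D_2}_\phi$ is compact, and again localize back to $U$; alternatively one uses the exact-regularity/compactness estimates near strongly pseudoconvex boundary points directly. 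Combining the two pieces and absorbing cross terms yields a compactness estimate of the form appearing in \eqref{EqnCompEst}, hence compactness of $H^U_\phi$.

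The main obstacle, as usual in localization arguments for Hankel and $\dbar$-Neumann operators, is step (2)–(3): the Bergman projections and $\dbar$-Neumann operators of $U$, $\D_1$, and $\D_2$ are genuinely different operators, and one cannot simply ``restrict'' them. The technical heart is a commutator estimate showing that $[\chi,\mathbf B_U]$ and $[\chi,\mathbf B_{\D_1}]$ differ by an operator that is compact (or smoothing) because $\chi$ is compactly supported inside $\D_2$, where the relevant interior elliptic regularity of $\dbar$ applies; equivalently, one shows $\mathbf B_U - \mathbf B_{\D_1}$ is smoothing on $\operatorname{supp}\chi$ since the two domains agree near that set and differ only near $\partial\D_2$, far from $\operatorname{supp}\chi$. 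Since this lemma is quoted from \cite[Remark 1]{Sahutoglu12} I would present the argument in outline and refer the reader there for the full commutator bookkeeping, emphasizing only that connectedness of $U$ is used to ensure $A^2(U)$ behaves well under the restriction map and that $C^2$-smoothness plus strong pseudoconvexity of $\D_2$ is exactly what gives the compactness estimate near the new boundary piece.
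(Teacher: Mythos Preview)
The paper does not prove Lemma~\ref{LemLocalization} at all: it is stated with the attribution ``contained in \cite[Remark~1]{Sahutoglu12}'' and no argument is given in the present paper. So there is nothing in the paper to compare your proposal against; your instinct at the end --- to give only an outline and refer the reader to \cite{Sahutoglu12} --- is exactly what the authors do, except that they omit even the outline.

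Your sketch is in the spirit of the localization argument in \cite{Sahutoglu12}, but a couple of details are imprecise. The description of the cutoff is muddled: the set ``$\overline{\D_1}\setminus\D_2$ intersected with $\overline U$'' is contained in $\partial\D_2$, not $\partial\D_1$, so your first sentence and your ``more precisely'' sentence are pointing at different pieces of $\partial U$. What one actually wants is $\chi\in C^\infty(\overline U)$ equal to $1$ near $\partial U\cap\partial\D_1$ and equal to $0$ near $\partial U\cap\partial\D_2$; one cannot in general arrange $\chi\in C^\infty_0(\D_2)$ with $\chi\equiv 1$ near $\partial\D_1\cap\overline{\D_2}$ if the two boundaries meet. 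Second, the claim that ``$\mathbf B_U-\mathbf B_{\D_1}$ is smoothing on $\operatorname{supp}\chi$'' is not the mechanism used in \cite{Sahutoglu12}; the argument there works through the canonical solution operator $\dbar^\ast N$ and the Kohn formula, using that near the strongly pseudoconvex piece $\partial\D_2$ the $\dbar$-Neumann operator satisfies subelliptic (hence compactness) estimates, while the $\chi$-supported piece, once extended by zero across $\partial\D_2$, can be fed into the compact operator $H^{\D_1}_\phi$ after correcting the $\dbar$-error $h\,\dbar\chi$, which is supported in the interior and hence harmless. If you intend to include a sketch, I would reorganize along those lines rather than via a Bergman-projection comparison.
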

 Now we are ready to prove Theorem \ref{Thm1}. 
 
\begin{proof}[Proof of Theorem \ref{Thm1}]
We present proof of the nontrivial direction. That is, we assume that $H_{\psi}$ 
 is compact on $A^2(\D)$ for all  $\psi\in C^{\infty}(\Dc)$ and prove that $N_1$ 
 is compact. Our proof is along the lines of the proof of \cite[Theorem 1.1]{FuChrist05}. 
 
Let $\rho(z,w)$ be a smooth defining function for $\D$ that is invariant under 
rotations in $w$. That is, $\rho(z,w)=\rho(z,|w|)$,  
\[\D= \{(z, w) \in \C^2: \rho(z, w) < 0\},\] 
and $\nabla \rho$ is nonvanishing on $b\D$.  Let 
$\Gamma_0 = \{(z, w) \in b\D : \rho_{|w|}(z, |w|)=0\}$ 
and \[\Gamma_k = \{(z, w) \in b\D: |\rho_{|w|}(z, |w|)|\geq 1/k\}\] 
for $k=1,2,\ldots$. We will show that 
$\Gamma_k$ is $B$-regular for $k=0,1,2,\ldots$ by establishing the estimates 
\eqref{estimate1} and \eqref{estimate2} below and invoking 
\cite[Lemma 10.2]{FuChrist05}. Then 
\[b\D=\bigcup_{k=0}^{\infty}\Gamma_k\] 
and  \cite[Proposition 1.9]{Sibony87} implies that $b\D$ is $B$-regular (satisfies 
Property $(P)$ in Catlin's terminology). This will be enough to conclude that 
$N_1$ is compact  on $L^2_{(0,1)}(\D)$

The proof of the fact that $\Gamma_0$ is $B$-regular is essentially contained in 
\cite[Lemma 10.1]{FuChrist05} together with the following fact: Let $\D$ be a 
smooth bounded pseudoconvex domain in $\C^2$. If $H_{\zb}$ and $H_{\wb}$ 
are  compact on $A^2(\D)$ then there is no analytic disc 
in $b\D$ (see \cite[Corollary 1]{CuckovicSahutoglu09}). 

Now we will prove that $\Gamma_k$ is $B$-regular for any fixed $k\geq 1$. Let 
$(z_0,w_0)\in \Gamma_k$, we argue in two cases. The first case is when   
$\rho_{|w|}(z_0,|w_0|)<0$ and the second case is $\rho_{|w|}(z_0,|w_0|)>0$.

We continue with the first case. Assume that $b\D$ near $(z_0,w_0)$ is given 
by $|w|=e^{-\varphi(z)}$. Let $D(z_0,r)$ denote the disc centered at $z_0$ with 
radius $r$ and 
\[U_{a,b}=D(z_0,a)\times \{w\in \C:|w_0|-b<|w|<|w_0|+b\}\] 
for $a,b>0$. Then let us choose $a,a_1,b,b_1>0$ such that $a_1>a,b_1>|w_0|+b$, the 
open sets 
\begin{align*}
U=\D\cap U_{a,b}= \left\{(z,w)\in \C^2:z\in D(z_0,a), ~e^{-\varphi(z)}<|w|<|w_0|+b\right\}
\end{align*}
and  $U_1=\D\cap U^{a_1,b_1}$ are connected where   
\[U^{a_1,b_1}=\left\{(z,w)\in \C^2:\frac{|z-z_0|^2}{a_1^2}+\frac{|w|^2}{b_1^2}<1\right\},\]
and finally $\overline{U}\subset U_1$. 
Then \[U_1=\left\{(z,w)\in \C^2:z\in V_1, e^{-\varphi(z)}<|w|<e^{-\alpha(z)}\right\}\]
where $V_1$ is a domain in $\C$ such that 
$\overline{D(z_0,a)}\subset V_1\subset D(z_0,a_1)$ and 
\[\alpha(z)=\log a_1-\log b_1-\frac{1}{2}\log(a_1^2-|z-z_0|^2).\]
One can check that $\alpha$ is subharmonic on $D(z_0,a_1)$, while pseudoconvexity 
of $\D$ implies that the function $\varphi$ is superharmonic on $D(z_0,a_1)$. 
Furthermore, since $B$-regularity is invariant under holomorphic change of 	
coordinates, by mapping under $(z,w)\to (z,\lambda w)$ for some $\lambda>1$, 
we may assume that  
	\[U_1\subset D(z_0,a_1)\times\{w\in\C:|w|>1\}.\]
For any $\beta\in C^{\infty}_0(D(z_0,a))$ let us choose $\psi\in C^{\infty}(\overline{V_1})$ 
such that $\psi_{\zb}=\beta$. Lemma \ref{LemLocalization} implies that  the 
Hankel operator $H^{U_1}_{\psi}$ (we use the superscript $U_1$ to emphasize the 
domain) is compact on the Bergman space $A^2(U_1)$. 

Let 
\[\lambda_n(z)=-\log\left(\frac{\pi}{n-1}\left(e^{(2n-2)\varphi(z)}
-e^{(2n-2)\alpha(z)}\right)\right)\] 
for $n=2,3,\ldots$.  One can check that since $\varphi$ is superharmonic and $\alpha$ is 
subharmonic, the function $ \lambda_n$ is subharmonic. Let $S^{V_1}_{\lambda_n}$ 
be the canonical solution operator for $\dbar$ on $L^2(V_1,\lambda_n)$. 
If $f_n=H^{U_1}_{\psi}w^{-n}$ then we claim that 
\[f_n(z,w)=g_n(z)w^{-n}\] 
where  $g_n=S^{V_1}_{\lambda_n}(\beta d\zb)$ and $n=2,3,\ldots$. Clearly 
$H^{U_1}_{\psi}w^{-n}=f_n\in L^2(U_1)$ and 
\[\dbar g_n(z)w^{-n}=\beta(z)w^{-n}d\zb.\] 
 
 To prove the claim we will just need to show that $g_n(z)w^{-n}$ is 
 orthogonal to $A^2(U_1)$. 
 That is, we need to show that $\langle g_n(z)w^{-n}, h(z)w^m\rangle_{U_1}=0$ 
 for any $h(z)\in A^2(V_1)$ and $m\in \mathbb{Z}$. Then 
\begin{align*}
\langle g_n(z)w^{-n}, h(z)w^m\rangle_{U_1}
&=\int_{U_1}g_n(z)w^{-n} \overline{h(z)w^m}dV(z)dV(w)\\
&=\int_{V_1}g_n(z)\overline{h(z)}dV(z) 
\int_{e^{-\varphi(z)}<|w|<e^{-\alpha(z)}}w^{-n}\overline{w^m}dV(w). 
\end{align*}
Unless $m=-n$ the integral 
$\int_{e^{-\varphi(z)}<|w|<e^{-\alpha(z)}}w^{-n}\overline{w^m}dV(w)=0$. 
So let us assume that $m=-n$. In that case we get
\[\int_{V_1}g_n(z)\overline{h(z)}dV(z) 
\int_{e^{-\varphi(z)}<|w|<e^{-\alpha(z)}}w^{-n}\overline{w^m}dV(w)
=\int_{V_1}g_n(z)\overline{h(z)}e^{-\lambda_n(z)}dV(z).\]
The integral on the right hand side above is zero because $g_n$ is orthogonal 
to $A^2(V_1,\lambda_n)$. Therefore, 
\[g_n(z)w^{-n}=H^{U_1}_{\psi}w^{-n}.\]
The equality above implies that 
$\frac{\partial g_n}{\partial \zb}=\frac{\partial \psi}{\partial \zb}=\beta$.
Then the compactness estimate \eqref{EqnCompEst} implies that  
\begin{align*}
\int_{D(z_0,a)}|g_n(z)|^2e^{-\lambda_n(z)}dV(z) 
\leq& \|g_n(z)w^{-n}\|_{U_1}^2\\
\leq& \ep \|\beta(z)w^{-n}\|_{U_1}\|w^{-n}\|_{U_1}+ 
C_{\ep}\|\beta(z)w^{-n}\|_{W^{-1}(U_1)}\|w^{-n}\|_{U_1} \\
=&\ep\left(\int_{D(z_0,a)}|\beta(z)|^2e^{-\lambda_n(z)}dV(z)\right)^{1/2}
\left(\int_{V_1} e^{-\lambda_n(z)}dV(z)\right)^{1/2} \\
&+ C_{\ep} \|\beta(z)w^{-n}\|_{W^{-1}(U_1)}
\left(\int_{V_1} e^{-\lambda_n(z)}\right)^{1/2}.
\end{align*}
Then by Lemma \ref{LemSobolev2} there exists $C>0$ such that 
\[\|\beta(z)w^{-n}\|_{W^{-1}({U_1})}\leq \frac{C}{n}\|\beta(z)w^{-n}\|_{U_1}
= \frac{C}{n}\|\beta\|_{L^2(D(z_0,a),\lambda_n)}.\] 
We note that to get the equality above we used the fact that $\beta$ is 
supported in $D(z_0,a)$. Hence we get 
\[\|g_n\|^2_{L^2(D(z_0,a),\lambda_n)}
\leq \left(\ep+\frac{CC_{\ep}}{n} \right) 
\|\beta\|_{L^2(D(z_0,a),\lambda_n)}\|1\|_{L^2(V_1,\lambda_n)}.\]
For any $\ep>0$ there exists an integer $n_{\ep}$ such that  
\[\frac{CC_{\ep}}{n}+ \frac{\pi a_1}{\sqrt{n-1}}\leq \ep\] 
for $n\geq n_{\ep}$. Then 
 \[\|g_n\|_{L^2(D(z_0,a),\lambda_n)}^2\leq 2\ep \|\beta\|_{L^2(D(z_0,a),\lambda_n)}  
\|1\|_{L^2(V_1,\lambda_n)}
\leq 2\ep^2 \|\beta\|_{L^2(D(z_0,a),\lambda_n)}\] 
for $n\geq n_{\ep}$ because $U\subset D(z_0,a)\times\{w\in\C:|w|>1\}$ and  
\begin{align*}
\|1\|_{L^2(V_1,\lambda_n)}&\leq \|1\|_{L^2(D(z_0,a_1),\lambda_n)}\\ &=\left(\int_{D(z_0,a_1)}\frac{\pi}{n-1}\left(e^{(2n-2)\varphi(z)}
-e^{(2n-2)\alpha(z)}\right)dV(z)\right)^{1/2}\\ 
&\leq \left(\int_{D(z_0,a_1)}\frac{\pi}{n-1}dV(z)\right)^{1/2}\\
&= \frac{\pi a_1}{\sqrt{n-1}}.
\end{align*}
Let $u\in C^{\infty}_0(D(z_0,a))$ and $n\geq n_{\ep}$. Then
\begin{align*}
 \int_{D(z_0,a)}|u(z)|^2e^{\lambda_n(z)}dV(z)=
 & \sup\left\{ |\langle u,\beta\rangle_{D(z_0,a)}|^2: \beta\in C^{\infty}_0(D(z_0,a)), 
\|\beta\|_{L^2(D(z_0,a),\lambda_n)}^2\leq 1 \right\}\\
\leq &\sup\left\{ |\langle u,(g_n)_{\zb}\rangle_{D(z_0,a)}|^2: 
\|g_n\|_{L^2(D(z_0,a),\lambda_n)}^2\leq 2\ep^2 \right\}\\
=&\sup\left\{ |\langle u_z,g_n \rangle_{D(z_0,a)}|^2: 
\|g_n\|_{L^2(D(z_0,a),\lambda_n)}^2\leq 2\ep^2 \right\}\\
\leq &2\ep^2 \int_{D(z_0,a)}|u_z(z)|^2e^{\lambda_n(z)}dV(z).
\end{align*}
There exists $0<c<1$ such that $e^{-\varphi(z)}<c e^{-\alpha(z)}$
for $z\in D(z_0,a)$. Then 
\[\frac{\pi}{n-1} e^{(2n-2)\varphi(z)}(1-c^{2n-2})<e^{-\lambda_n(z)}<\frac{\pi}{n-1} 
e^{(2n-2)\varphi(z)}.\]
So for large $n$ we have 
\[\frac{\pi}{2(n-1)} e^{(2n-2)\varphi(z)} <e^{-\lambda_n(z)}
<\frac{\pi}{n-1} e^{(2n-2)\varphi(z)}\]
and 
\begin{align*}
 \frac{n-1}{\pi}\int_{D(z_0,a)}|u(z)|^2e^{(2-2n)\varphi(z)}dV(z)
<&\int_{D(z_0,a)}|u(z)|^2e^{\lambda_n(z)}dV(z) \\
\leq & 2\ep^2 \int_{D(z_0,a)}|u_z(z)|^2e^{\lambda_n(z)}dV(z)\\
\leq& \frac{4\ep^2(n-1)}{\pi} \int_{D(z_0,a)}|u_z(z)|^2e^{(2-2n)\varphi(z)}dV(z).
\end{align*}
That is, for any $\ep>0$ and  $u\in C^{\infty}_0(D(z_0,a))$ 
\begin{equation}\label{estimate1}
\int_{D(z_0,a)}|u(z)|^2e^{(2-2n)\varphi(z)}dV(z)\leq 4\ep^2  
\int_{D(z_0,a)}|u_z(z)|^2e^{(2-2n)\varphi(z)}dV(z)
\end{equation}
for large $n$.

The estimate in \eqref{estimate1} is identical to the one in 
\cite[pg. 38, proof of Lemma 10.2]{FuChrist05}. 
That is $\lambda^m_{n\varphi}(D(z_0,a))\to \infty$ as $n\to \infty$ 
(see \cite[Definition 2.3]{FuChrist05}). 
Since $\varphi$ is smooth and subharmonic, \cite[Theorem 1.5]{FuChrist05} 
implies that $\lambda^e_{n\varphi}(D(z_0,a))\to \infty$ as $n\to \infty$. 
We note that \cite[Theorem 1.5]{FuChrist05} implies that 
if $\lambda^m_{n\varphi}(D(z_0,a))\to \infty$ as $n\to \infty$ then 
$\lambda^e_{n\varphi}(D(z_0,a))\to \infty$ as $n\to \infty$. 
This is enough to conclude that $\Gamma_k$ is $B$-regular. This 
argument is contained in  the proof of Proposition 9.1 converse 
of (1) in \cite[pg 33]{FuChrist05}. 
We repeat the argument here for the convenience of the reader. 
Let $V=\{z\in D(z_0,a):\Delta\varphi(z)>0\}$ and 
$K_0=\overline{D(z_0,a/2)}\setminus V$. Then $V$ is open and 
$K_0$ is a compact subset of $D(z_0,a)$. Furthermore, $\Delta\varphi=0$ 
on $K_0$. If $K_0$ has non-trivial fine interior then it supports a nonzero 
function $f\in W^1(\C)$ (see \cite[Proposition 4.17]{StraubeBook}). 
Then  
\[\lambda^e_{n\varphi}(D(z_0,a))\leq \frac{\|\nabla f\|^2}{\|f\|^2}
<\infty \text{ for all } n.\]
Which is a contradiction. Hence $K_0$ has empty fine interior which implies 
that $K_0$ satisfies property (P) (see \cite[Proposition 4.17]{StraubeBook} or  
\cite[Proposition 1.11]{Sibony87}). Therefore, for $M>0$ there exists an open 
neighborhood $O_M$ of $K_0$ and $b_M\in C^{\infty}_0(O_M)$ such that 
$|b_M|\leq 1/2$ on $O_M$ and $\Delta b_M>M$ on $K_0$. Furthermore, 
using the assumption that $|w|>0$ on $\Gamma_k$ one can choose $M_1$ such that 
the function $g_{M_1}(z,w)=M_1(|w|^2e^{\varphi(z)}-1)+b_M(z)$ has the following 
properties: $ |g_{M_1}| \leq1$ and the complex Hessian $H_{g_{M_1}}(W)\geq M\|W\|^2$ 
 on $\Gamma_k\cap \overline{D(z_0,a)}$ where $W$ is complex tangential direction. 
 Then \cite[Proposition 3.1.7]{AyyuruThesis} implies that  
 $\Gamma_k\cap \overline{D(z_0,a/2)}$ satisfies property (P) (hence it is $B$-regular).
Therefore, \cite[Corollary 4.13]{StraubeBook} implies that $\Gamma_k$ is $B$-regular. 

The computations in the second case (that is $\rho_{|w|}(z_0,|w_0|)>0$) are very similar. 
So we will just highlight the differences between the two cases. We define 
\[U^{a_1,b_1}=\left\{(z,w)\in \C^2: |w|>b_1|z-z_0|^2+a_1\right\}\]
and 
\[U_1=\D\cap U^{a_1,b_1}
=\left\{(z,w)\in \C^2:z\in V_1,e^{-\alpha(z)} <|w|<e^{-\varphi(z)}\right\}\]
where $V_1$ is a domain in $\C$ and where $\alpha(z)=-\log(b_1|z-z_0|^2+a_1)$ 
is a strictly superharmonic function. One can show that $bU^{a_1,b_1}$ is strongly 
pseudoconvex.  We choose $a,a_1,b,b_1>0$ such that  such that 
$\overline{D(z_0,a)}\subset V_1$ and $U$ is given by
\begin{align*}
U=\D\cap U_{a,b}
= \left\{(z,w)\in \C^2:z\in D(z_0,a), e^{-\alpha(z)} <|w|<e^{-\varphi(z)}\right\}
\end{align*}
where $U_{a,b}=D(z_0,a)\times \{w\in \C:|w_0|-b<|w|<|w_0|+b\}$. 
Furthermore, we define 
\[\lambda_n(z)=-\log\left(\frac{\pi}{n+1}\left(e^{-(2n+2)\varphi(z)}
-e^{-(2n+2)\alpha(z)}\right)\right)\]
 for $n=0,1,2,\ldots$ and by scaling $U_1$ in $w$ variable if necessary, 
 we will assume that $U_1\subset D(z_0,a_1)\times \{w\in \C:|w|<1\}$ so that 
$\|1\|_{L^2(D(z_0,a_1),\lambda_n)}$ goes to zero as $n\to\infty$. 
One can check that $\lambda_n$ is subharmonic for all $n$.

We take functions $\beta\in  C^{\infty}_0(D(z_0,a))$ and consider symbols 
$\psi\in C^{\infty}(\overline{V_1})$ such that $\psi_{\overline{z}}=\beta$. 
Then we consider the functions $H_{\psi}w^n$ for $n=0,1,2,\ldots$. 
Calculations similar to the ones in the previous case  reveal that 
$g_n(z)w^n=H_{\psi}w^n$ where  $g_n=S^{V_1}_{\lambda_n}(\beta d\zb)$. 
Using similar manipulations and again the compactness estimate 
\eqref{EqnCompEst} we conclude that for any $\ep>0$ there exists an integer 
$n_{\ep}$ such that for $u\in C^{\infty}_0(D(z_0,a))$ and $n\geq n_{\ep}$ we have  
\begin{equation}\label{estimate2}
\int_{D(z_0,a)}|u(z)|^2e^{(2n+2)\varphi(z)}dV(z)\leq \ep  
\int_{D(z_0,a)}|u_z(z)|^2e^{(2n+2)\varphi(z)}dV(z).
\end{equation}
Finally, an argument similar to the one right after \eqref{estimate1} implies that 
$\Gamma_k$ is $B$-regular.
\end{proof}

\section*{Acknowledgement}
We would like to thank the anonymous referee for constructive comments.

\end{document}